\patchcmd\Gread@eps{\@inputcheck#1 }{\@inputcheck"#1"\relax}{}{}
\newtheorem{theorem}{Theorem}[section]
\newtheorem{proposition}[theorem]{Proposition}
\newtheorem{conjecture}[theorem]{Conjecture}
\newtheorem{corollary}[theorem]{Corollary}
\newtheorem{remark}[theorem]{Remark}
\newcommand{\qed}{\hfill $\square$\medskip}
\begin{document}

\title{Co-even domination number of some binary operations on graphs}

\author{
Nima Ghanbari
}

\date{\today}

\maketitle

\begin{center}
Department of Informatics, University of Bergen, P.O. Box 7803, 5020 Bergen, Norway\\
\bigskip

{\tt Nima.Ghanbari@uib.no }
\end{center}

%%%%%%%%%%%%%%ABSTRACT%%%%%%%%%%%%%%%%%%%%%%%%%%%%%%%%%%%%%%%%%%%%%%%%%%%%%%%%%%%%

\begin{abstract}
Let $G=(V,E)$ be a simple graph. A dominating set of $G$ is a subset $D\subseteq V$ such that every vertex not in $D$ is adjacent to at least one vertex in $D$.
The cardinality of a smallest dominating set of $G$, denoted by $\gamma(G)$, is the domination number of $G$. A dominating set $D$ is called co-even dominating set if the degree of vertex $v$ is even number for all $v\in V-D$. 
 The cardinality of a smallest co-even dominating set of $G$, denoted by $\gamma _{coe}(G)$, is the co-even domination number of $G$.
In this paper, we study the co-even domination number of some binary operations on graphs.
\end{abstract}

\noindent{\bf Keywords:} domination number, co-even dominating set, join, corona, Haj\'{o}s sum

\medskip
\noindent{\bf AMS Subj.\ Class.:}  05C69, 05C76

%%%%%%%%%%%%%%%%%%%%%%%%%%%%%%%%%%%%%%%%%%%%%%%%%%%%%%%%%%%%%%%%%%%%%%%%%%%%%%%%%
%%%%%%%%%%%%%%%%%%%%%%%%%%%%%%%%%%%%%%%%%%%%%%%%%%%%%%%%%%%%%%%%%%%%%%%%%%%%%%%%%
\section{Introduction}

Let $G = (V,E)$ be a simple graph with $n$ vertices. Throughout this paper we consider only simple graphs.  A set $D\subseteq V(G)$ is a  dominating set if every vertex in $V(G)- D$ is adjacent to at least one vertex in $D$.
The  domination number $\gamma(G)$ is the minimum cardinality of a dominating set in $G$. There are various domination numbers in the literature.
For a detailed treatment of domination theory, the reader is referred to \cite{domination}.

\medskip

Recently, Shalaan et. all introduced the concept of co-even domination number \cite{Sha}. By their definition,
a dominating set $D$ is called a co-even dominating set if the degree of vertex $v$ is even number for all $v\in V-D$. The cardinality of a smallest co-even dominating set of $G$, denoted by $\gamma _{coe}(G)$, is the co-even domination number of $G$.  They studied the co-even domination number of specific graphs such as path, cycle, complete, complete bipartite, star, regular, and wheel graphs. Later, they studied the co-even domination number of ladder, lollipop, butterfly, jellyfish, helm, corona, fan, and double fan graphs in \cite{Sha1}. Demirpolat et. all in \cite{Demir}, presented  co-even domination number of path related graphs as thorn graphs, thorn path, thorn rod, thorn
ring, thorn star, banana tree, coconut tree and binomial trees.

\medskip 

The join $G = G_1 + G_2$ of two graph $G_1$ and $G_2$ with disjoint vertex sets $V_1$ and $V_2$ and
edge sets $E_1$ and $E_2$ is the graph union $G_1\cup G_2$ together with all the edges joining $V_1$ and
$V_2$ \cite{Har}.The corona $G_1\circ G_2$ is the graph arising from the
disjoint union of $G_1$ with $| V_1 |$ copies of $G_2$, by adding edges between
the $i$th vertex of $G_1$ and all vertices of $i$th copy of $G_2$ \cite{Har}. The corona $G\circ K_1$, in particular, is the graph constructed from a copy of $G$,
where for each vertex $v\in V(G)$, a new vertex $v'$ and a pendant edge $vv'$ are added. The neighbourhood corona  $G_1 \star G_2$ is the graph obtained by
taking one copy of $G_1$ and $|V_1|$ copies of $G_2$ and  joining the neighbours of the $i$th vertex of $G_1$ to every vertex in the $i$th copy of $G_2$ \cite{Gop}. Let $G$ and $G'$ be two undirected graphs, $vw$ be an edge of $G$, and $xy$ be an edge of $G'$. Then the Haj\'{o}s sum $G''=G(vw)+_H G'(xy),$ forms a new graph that combines the two graphs by identifying vertices $v$ and $x$ into a single vertex, removing the two edges $vw$ and $xy$, and adding a new edge $wy$ \cite{HAJOSSUM}.

\medskip
In the next Section, we study the co-even domination number of join and corona of two graphs. In Section 3, we find some bounds for the neighbourhood corona of two graphs. Finally, in Section 4, we find a sharp upper bound for the Haj\'{o}s sum of two graphs and propose a conjecture for the lower bound.

\section{Co-even domination number of join and corona of two graphs}

In this section, we study the co-even domination number of join and corona two graphs. First we state some known results.

	\begin{proposition}\cite{Sha}\label{pro-sha}
Let $G=(V,E)$ be a graph and $D$ is a co-even dominating set. Then,
\begin{itemize}
\item[(i)]
All vertices of odd or zero degrees belong to every co-even dominating set.
\item[(ii)]
$deg(v)\geq 2$, for all $v\in V-D$.
\item[(iii)]
If $G$ is $r$-regular graph, then
	 \[
 	\gamma _{coe}(G)=\left\{
  	\begin{array}{ll}
  	{\displaystyle
  		n}&
  		\quad\mbox{if $r$ is odd, }\\[15pt]
  		{\displaystyle
  			\gamma (G)}&
  			\quad\mbox{if $r$ is even.}
  				  \end{array}
  					\right.	
  					\]
\item[(iv)]
$\gamma (G) \leq \gamma_{coe} (G).$
\end{itemize}
	\end{proposition}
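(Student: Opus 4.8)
The plan is to prove each of the four items of Proposition~\ref{pro-sha} directly from the definition of a co-even dominating set, since they are all elementary consequences. The central observation throughout is that the defining condition concerns precisely the vertices \emph{outside} $D$: every $v \in V - D$ must have even degree.

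First I would establish $(i)$ by contrapositive. Suppose $v$ is a vertex whose degree is odd or zero, and suppose toward a contradiction that $v \notin D$ for some co-even dominating set $D$. Then $v \in V - D$, so by definition $\deg(v)$ must be even; moreover a vertex of degree $0$ cannot be dominated by any vertex of $D$ unless it lies in $D$ itself. Either way we reach a contradiction, so $v$ belongs to every co-even dominating set. Item $(ii)$ follows from the same definitional bookkeeping together with the domination requirement: if $v \in V - D$ then $\deg(v)$ is even, and $v$ must be adjacent to at least one vertex of $D$, forcing $\deg(v) \geq 1$; since the degree is even and nonzero, $\deg(v) \geq 2$. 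I would present $(i)$ and $(ii)$ in this order because $(ii)$ reuses the parity argument from $(i)$.

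For $(iii)$, I would split on the parity of $r$. If $r$ is odd, then \emph{every} vertex has odd degree, so by $(i)$ every vertex lies in each co-even dominating set, giving $D = V$ and hence $\gamma_{coe}(G) = n$. If $r$ is even, I would argue that any ordinary minimum dominating set $D$ already satisfies the co-even condition automatically, since every vertex of $V - D$ has degree $r$, which is even; thus such a $D$ is a co-even dominating set, giving $\gamma_{coe}(G) \le \gamma(G)$. Combining this with the general inequality $\gamma(G) \le \gamma_{coe}(G)$ from $(iv)$ yields equality. Finally $(iv)$ is immediate: every co-even dominating set is in particular a dominating set, so the minimum over the smaller (constrained) family of sets cannot be smaller than the minimum over all dominating sets, whence $\gamma(G) \le \gamma_{coe}(G)$.

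None of the four parts presents a genuine obstacle; the only point requiring a little care is the even case of $(iii)$, where one must verify that a minimum dominating set can be used verbatim as a co-even dominating set rather than needing modification. Logically $(iii)$ depends on $(iv)$, so although the items are numbered in the order $(i)$--$(iv)$, I would either prove $(iv)$ before the even case of $(iii)$ or simply note that the needed inequality is the trivial direction and establish it inline.
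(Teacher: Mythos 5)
Your proof is correct. Note, however, that the paper itself offers no proof of this proposition: it is imported verbatim from the cited source of Shalaan and Omran, so there is no argument in the paper to compare against; your write-up supplies exactly the standard elementary verification that the source intends. You handle the two points that genuinely require care. First, in $(i)$, the parity condition alone does \emph{not} force isolated vertices into $D$ (since $0$ is even), and you correctly fall back on the domination requirement: a degree-$0$ vertex has no neighbour in $D$, so it must lie in $D$ itself. Second, you correctly flag the logical dependence of the even case of $(iii)$ on $(iv)$ and resolve it by establishing the trivial inequality $\gamma(G)\le\gamma_{coe}(G)$ first; the observation that a minimum dominating set of an even-regular graph is \emph{verbatim} a co-even dominating set is precisely the right mechanism for $\gamma_{coe}(G)\le\gamma(G)$. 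No gaps.
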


Now we consider to join of two graphs:

	\begin{theorem}
Let $G=(V(G),E(G))$ and $H=(V(H),E(H))$ be two connected graphs, $E_{V(G)}$ and $O_{V(G)}$ be the set of vertices with even degree and odd degree of $G$, respectively. Then for $G+H$, 
\begin{itemize}
\item[(i)]
If $|V(G)|=2n$ and $|V(H)|=2n'$, for some $n,n'\in \mathbb{N} $, then if we have at least one vertex with odd degree in both of graphs, 
$$\gamma _{coe}(G+H)=\Big|O_{V(G)}\cup O_{V(H)}\Big|,$$
otherwise, $\gamma _{coe}(G+H) \leq 2$.
\item[(ii)]
If $|V(G)|=2n-1$ and $|V(H)|=2n'-1$, for some $n,n'\in \mathbb{N} $, then if we have at least one vertex with even degree in both of graphs, 
$$\gamma _{coe}(G+H)=\Big|E_{V(G)}\cup E_{V(H)}\Big|,$$
otherwise, $\gamma _{coe}(G+H) \leq 2$.
\item[(iii)]
If $|V(G)|=2n$ and $|V(H)|=2n'-1$, for some $n,n'\in \mathbb{N} $, then if we have at least one vertex with even degree in $G$ and at least one vertex with odd degree in $H$, 
$$\gamma _{coe}(G+H)=\Big|E_{V(G)}\cup O_{V(H)}\Big|,$$
otherwise, $\gamma _{coe}(G+H) \leq 2$.
\end{itemize}
	\end{theorem}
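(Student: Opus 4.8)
The plan is to reduce all three cases to a single parity computation in the join and then invoke Proposition~\ref{pro-sha}. The key observation is that in $G+H$ every vertex of $G$ is joined to all of $V(H)$ and every vertex of $H$ to all of $V(G)$, so $\deg_{G+H}(v)=\deg_G(v)+|V(H)|$ for $v\in V(G)$ and $\deg_{G+H}(w)=\deg_H(w)+|V(G)|$ for $w\in V(H)$. Thus the parity of a vertex's degree in $G+H$ is its original parity shifted by the parity of the opposite part's order. First I would record, for each case, which vertices are odd in $G+H$: in (i) both shifts are even, so the odd vertices of $G+H$ are exactly $O_{V(G)}\cup O_{V(H)}$; in (ii) both shifts are odd, so parities flip on both sides and the odd vertices are $E_{V(G)}\cup E_{V(H)}$; in (iii) the $G$-side flips while the $H$-side is preserved, giving $E_{V(G)}\cup O_{V(H)}$. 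In every case this set, call it $F$, is precisely the union appearing in the statement.

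Next I would establish the two matching inequalities. For the lower bound, Proposition~\ref{pro-sha}(i) forces every odd-degree vertex into each co-even dominating set, so $F\subseteq D$ and $\gamma_{coe}(G+H)\ge|F|$; there are no zero-degree vertices to worry about, since each vertex of $G+H$ has degree at least $\min\{|V(G)|,|V(H)|\}\ge 1$. For the upper bound I would show $F$ is itself a co-even dominating set. The co-even condition is automatic, because by the parity computation the vertices outside $F$ are exactly the even-degree vertices of $G+H$. Domination is where the hypothesis enters: the assumption in each case is precisely that $F$ contains a vertex on the $G$-side and a vertex on the $H$-side. Since any $x\in V(G)\setminus F$ is adjacent to all of $V(H)$, hence to the $H$-side vertex of $F$, and any $x\in V(H)\setminus F$ is adjacent to all of $V(G)$, hence to the $G$-side vertex of $F$, the set $F$ dominates $G+H$. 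This gives $\gamma_{coe}(G+H)=|F|$.

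Finally I would treat the ``otherwise'' branch, which I expect to be the main obstacle, because the written hypothesis fails in two genuinely different ways. The clean subcase is when $F$ meets neither side: in (i) this means $G$ and $H$ are both all-even, while in (iii) it means $G$ is all-odd and $H$ all-even, and in each the parity computation makes $G+H$ entirely even-degree. Then the co-even constraint is vacuous, so $\gamma_{coe}(G+H)=\gamma(G+H)$, and selecting one vertex from each part yields a dominating set of size $2$, whence $\gamma_{coe}(G+H)\le 2$. Two bookkeeping points deserve care. In case (ii) an odd vertex-count forces each of $|E_{V(G)}|,|E_{V(H)}|$ to be odd (since the number of odd-degree vertices of any graph is even), hence nonzero, so the hypothesis can never fail and the ``otherwise'' clause is vacuous there. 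The remaining subcase, where $F$ is nonempty on exactly one side, is the delicate one: the forced vertices of $F$ then lie entirely in one part, and an extra vertex from the empty side must be adjoined to dominate the stranded even-degree vertices, giving a set of size $|F|+1$. Verifying this configuration cleanly, and reconciling it with the stated bound $\gamma_{coe}(G+H)\le 2$, is the step I would expect to require the most attention.
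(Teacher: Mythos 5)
Your treatment of the equality branches is correct and is essentially the paper's argument made precise: the paper likewise forces the odd-degree vertices of the join into every co-even dominating set via Proposition \ref{pro-sha} and asserts that this set dominates; your explicit parity-shift computation $\deg_{G+H}(v)=\deg_G(v)+|V(H)|$, your check that the hypothesis is exactly what guarantees $F$ meets both sides (so domination follows from the join edges), and your note that no zero-degree vertices arise are refinements in rigor, not a different route. Your observation that the ``otherwise'' clause of (ii) is vacuous --- $|E_{V(G)}|=|V(G)|-|O_{V(G)}|$ is odd, hence nonzero, whenever $|V(G)|$ is odd, by the handshake lemma --- is correct and does not appear in the paper.

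The subcase you flag as ``delicate'' is not a gap in your argument but a genuine error in the statement, and you should say so outright rather than attempt to reconcile it. When the forced set $F$ is nonempty but lies on one side of the join only, the bound $\gamma_{coe}(G+H)\le 2$ fails: in case (i) take $G=K_4$ and $H=C_4$, so that $H$ has no odd-degree vertex and the ``otherwise'' branch applies; every vertex of $K_4$ has degree $3+4=7$ in $K_4+C_4$, so Proposition \ref{pro-sha} forces all four into every co-even dominating set and $\gamma_{coe}(K_4+C_4)\ge 4$. (Likewise $P_6+C_4$ gives the value $3$, and analogous examples break the ``otherwise'' clause of (iii); note also that, contrary to your sketch, no extra vertex need even be adjoined in the $K_4$ example --- $F$ itself dominates, yet $|F|=4>2$.) The paper's own proof of this branch collapses at exactly the point you hesitated: it begins ``suppose that $G$ has only one vertex with odd degree,'' a configuration that cannot exist since every graph has an even number of odd-degree vertices, and the case $|O_{V(G)}|\ge 2$ with $H$ all even is never addressed. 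The bound $\le 2$ is true precisely in the subcase you did complete, namely $F=\emptyset$, where all degrees in $G+H$ are even, co-even domination reduces to ordinary domination, and one vertex from each side suffices.
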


	\begin{proof}
\begin{itemize}
\item[(i)]
Suppose that $|V(G)|=2n$ and $|V(H)|=2n'$, for some $n,n'\in \mathbb{N} $ and we have at least one vertex with odd degree in both of graphs such as $g\in V(G)$ and $h\in V(H)$. Then by definition of join of two graphs, the degree of $g$ and $h$ in $G+H$ is odd too. By Proposition \ref{pro-sha}, all vertices of odd degrees belong to every co-even dominating set. It is easy to see that $O_{V(G)}\cup O_{V(H)}$ is a co-even dominating set for $G+H$. Now suppose that $G$ has only one vertex with odd degree, $g'\in V(G)$, and all vertices in $H$ have even degree. Then by letting our dominating set as union of $\{g'\}$ and one arbitrary vertex of $H$, we have a co-even dominating set for $G+H$. In case that $\gamma _{coe}(G)=1$, then we can choose that vertex for $G+H$ too. So $\gamma _{coe}(G+H) \leq 2$, and therefore we have the result.
\item[(ii)]
The proof is similar to part (i).
\item[(iii)]
The proof is similar to part (i).\qed
\end{itemize}
	\end{proof}

Now we find the co-even domination number of  the corona of two graphs.

	\begin{theorem}\label{GoH}
Let $G=(V(G),E(G))$ and $H=(V(H),E(H))$ be two connected graphs, $E_{V(G)}$ and $O_{V(G)}$ be the set of vertices with even degree and odd degree of $G$, respectively. Also $H$ is not graph $K_1$. Then for $G\circ H$, 
\begin{itemize}
\item[(i)]
If for every $u\in V(H)$, $d(u)=2n-1$, for some $n\in \mathbb{N} $, then
$$\gamma _{coe}(G\circ H)=\Big|V(G)\Big|.$$
\item[(ii)]
If there exists at least one vertex with even degree in $H$, then If $|V(H)|=2t$, for some $t\in \mathbb{N} $, we have 
$$ \gamma _{coe}(G\circ H)= \Big|V(G)\Big|\Big|E_{V(H)}\Big|+\Big|O_{V(G)}\Big|,$$
otherwise, 
$$ \gamma _{coe}(G\circ H)= \Big|V(G)\Big|\Big|E_{V(H)}\Big|+\Big|E_{V(G)}\Big|.$$
\end{itemize}
	\end{theorem}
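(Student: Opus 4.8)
The plan is to determine the degree parities of the vertices of $G\circ H$, invoke Proposition \ref{pro-sha}(i) to force all odd-degree vertices into every co-even dominating set (which produces the claimed formula as a lower bound), and then show that this forced set is itself dominating, so that it is a smallest co-even dominating set.

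First I would record the degrees in $G\circ H$. A vertex in the $i$th copy of $H$ corresponding to $u\in V(H)$ has degree $d_H(u)+1$, namely its neighbours inside the copy together with the attached vertex $v_i\in V(G)$, whereas $v_i$ itself has degree $d_G(v_i)+|V(H)|$. Passing to parities, a copy vertex is odd precisely when $u\in E_{V(H)}$, so each of the $|V(G)|$ copies contributes $|E_{V(H)}|$ odd vertices, giving $|V(G)|\,|E_{V(H)}|$ in total; and $v_i$ is odd precisely when $d_G(v_i)$ and $|V(H)|$ have opposite parity, which yields $|O_{V(G)}|$ odd $G$-vertices when $|V(H)|$ is even and $|E_{V(G)}|$ when $|V(H)|$ is odd.

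Because $H$ is connected and $H\neq K_1$, every copy vertex has degree at least $2$ and every $v_i$ has degree at least $|V(H)|\geq 2$, so $G\circ H$ has no vertex of degree $0$. Proposition \ref{pro-sha}(i) then forces exactly the odd-degree vertices counted above, whose number is the right-hand side of each formula in (ii); this gives the lower bound. In case (i) every vertex of $H$ is odd, so all copy vertices become even and none is forced; here I would instead use that the blocks $\{v_i\}\cup(\text{$i$th copy})$ partition $V(G\circ H)$ and that each block must meet any dominating set, since the neighbours of a copy vertex all lie in its own block. This gives $\gamma_{coe}(G\circ H)\geq|V(G)|$, which is attained by the co-even dominating set $D=V(G)$.

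What remains is the upper bound in (ii): that the set $D^{*}$ of all odd-degree vertices is already dominating, co-evenness being automatic since every vertex outside $D^{*}$ is even. An even-degree $v_i$ is handled immediately, for $E_{V(H)}\neq\varnothing$ guarantees an odd copy vertex in copy $i$, and that vertex lies in $D^{*}$ and is adjacent to $v_i$. \textbf{The main obstacle is to dominate the even-degree copy vertices}: such a vertex $u$ (with $u\in O_{V(H)}$) needs a neighbour in $D^{*}$, which must be either $v_i$ (when $v_i$ is odd) or an $H$-neighbour of $u$ belonging to $E_{V(H)}$. I expect this to be the crux, since it reduces to showing that every odd-degree vertex of $H$ either sits in a copy whose $v_i$ is forced or has an even-degree neighbour in $H$; I would attack it using the connectivity of $H$ and the hypothesis $E_{V(H)}\neq\varnothing$, while checking small configurations carefully, as this is precisely the place where the argument is most likely to require an additional assumption on $H$.
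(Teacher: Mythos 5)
Your degree computations, the forced-vertex lower bound, and your block-partition argument for case (i) are all correct, and in fact your case (i) lower bound (each set $\{v_i\}\cup C_i$, where $C_i$ is the $i$th copy of $H$, contains the closed neighbourhood of every vertex of $C_i$, so any dominating set meets all $|V(G)|$ blocks) is more rigorous than the paper's, which disposes of minimality with ``one can easily check.'' The obstacle you single out in case (ii) --- dominating a copy vertex $u\in O_{V(H)}$ when its base vertex $v_i$ is not forced --- is exactly where the paper's proof is silent: the paper verifies only that the vertices of $G$ are dominated by odd copy vertices and never checks the copy vertices that have even degree in $G\circ H$.

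Your suspicion that this step ``is most likely to require an additional assumption on $H$'' is correct, and no appeal to connectivity and $E_{V(H)}\neq\varnothing$ can close it: the step, and with it the stated formula, is false in general. Take $H$ to be the paw graph (triangle $a,b,c$ with pendant edge $cd$; degrees $2,2,3,1$) and $G=C_3$. Then $E_{V(H)}=\{a,b\}$, $|V(H)|=4=2t$, $O_{V(G)}=\varnothing$, so the theorem predicts $\gamma_{coe}(G\circ H)=3\cdot 2+0=6$. But in $G\circ H$ the degrees are $\deg(a_i)=\deg(b_i)=3$, $\deg(c_i)=4$, $\deg(d_i)=2$, $\deg(v_i)=2+4=6$, so the forced set is $\{a_i,b_i: 1\le i\le 3\}$, and $d_i$ has closed neighbourhood $\{d_i,c_i,v_i\}$, disjoint from the forced set and from the other copies' blocks; hence every co-even dominating set needs at least one extra vertex per copy, giving $\gamma_{coe}(G\circ H)\geq 9$ (and $9$ is attained by adding each $v_i$, all of even degree). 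So the ``crux'' you identified is a genuine counterexample to the theorem whenever some odd-degree vertex of $H$ has no even-degree neighbour in $H$ and sits in a copy whose base vertex is unforced; a hypothesis such as ``every vertex of $O_{V(H)}$ has a neighbour in $E_{V(H)}$'' (or restricting the parity pattern of $G$) would be needed to repair case (ii). You should not regard your failure to prove this step as a gap in your argument --- it is a gap in the paper's.
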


	\begin{proof}
\begin{itemize}
\item[(i)]
Suppose that the degree of all vertices in $H$ are odd. Then by the definition of corona of two graphs, the degree of these vertices in $G \circ H$ are even. Therefore by considering our dominating set as $V(G)$, we have $\gamma _{coe}(G\circ H)\leq \Big|V(G)\Big|$. One can easily check that we do not have a smaller size choice for our dominating set. Therefore we have the result.
\item[(ii)]
Suppose that the degree of some vertices in $H$ are even. Then  the degree of these vertices in $G \circ H$ are odd. By Proposition \ref{pro-sha}, we need all these vertices in our dominating set. Every vertex in $G$ is now dominated by at least one vertex in a copy of $H$. Therefore the union of vertices with even degree of copies of $H$ is a subset of our co-even secure dominating set. Now, if $|V(H)|=2t$, for some $t\in \mathbb{N} $, then the degree of an odd vertex in $G$ will remain odd in $G \circ H$ and we need these vertices in our dominating set too. Since we should put all vertices with odd degree in our dominating set, then the mentioned set is a secure dominating set with smallest size and
 $$ \gamma _{coe}(G\circ H)= \Big|V(G)\Big|\Big|E_{V(H)}\Big|+\Big|O_{V(G)}\Big|.$$ 
 By the same argument, if $|V(H)|=2t'-1$, for some $t'\in \mathbb{N} $, then 
$$ \gamma _{coe}(G\circ H)= \Big|V(G)\Big|\Big|E_{V(H)}\Big|+\Big|E_{V(G)}\Big|,$$ 
and therefore we have the result.
\qed
\end{itemize}
	\end{proof}

In Theorem \ref{GoH}, we considered all cases except $H=K_1$.  Now we consider to this special case.

	\begin{theorem}\label{GoK1}
Let $G=(V(G),E(G))$ be a connected graph and $E_{V(G)}$  be the set of vertices with even degree  of $G$. Then for $G\circ K_1$, 
$$ \gamma _{coe}(G\circ K_1)= \Big|V(G)\Big|+\Big|E_{V(G)}\Big|.$$ 
	\end{theorem}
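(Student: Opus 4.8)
The plan is to read off the degree of every vertex of $G\circ K_1$, invoke Proposition \ref{pro-sha}(i) to determine exactly which vertices are forced into every co-even dominating set, and then check that this forced collection is already a co-even dominating set, so that the lower bound it provides is in fact attained. First I would fix notation for the structure: $G\circ K_1$ is a copy of $G$ together with, for each $v\in V(G)$, a pendant vertex $v'$ adjacent only to $v$. Hence each pendant $v'$ has degree $1$, and an original vertex $v$ of degree $d$ in $G$ acquires degree $d+1$ in $G\circ K_1$.

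Next I would sort the vertices by parity of degree. Every pendant vertex has odd degree $1$, so by Proposition \ref{pro-sha}(i) all $|V(G)|$ of them lie in every co-even dominating set. An original vertex $v\in E_{V(G)}$ (even degree in $G$) now has odd degree $d+1$ and is likewise forced, contributing $|E_{V(G)}|$ vertices; an original vertex in $O_{V(G)}$ instead acquires \emph{even} degree $d+1$ and is not forced. Since $G$ is connected, $G\circ K_1$ has no vertex of degree zero, so the set of forced vertices is exactly $D_0=\{v' : v\in V(G)\}\cup E_{V(G)}$, of cardinality $|V(G)|+|E_{V(G)}|$. This yields at once the lower bound $\gamma_{coe}(G\circ K_1)\ge |V(G)|+|E_{V(G)}|$.

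For the matching upper bound I would verify that $D_0$ is itself a co-even dominating set. The vertices outside $D_0$ are precisely the original vertices of odd degree in $G$; each such $v$ is adjacent to its pendant $v'\in D_0$, so $D_0$ dominates all of $V(G\circ K_1)$. Furthermore each such $v$ has even degree $d+1\ge 2$ in $G\circ K_1$, so the co-even condition holds on $V-D_0$. Thus $D_0$ is a co-even dominating set, giving $\gamma_{coe}(G\circ K_1)\le |D_0|=|V(G)|+|E_{V(G)}|$.

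I do not expect a serious obstacle; the only point needing care is the parity bookkeeping, namely that the single pendant edge flips the parity of every original vertex, so that $E_{V(G)}$ turns into the forced odd-degree part while $O_{V(G)}$ turns into the dominated even-degree part — and that the forced set happens to dominate the remaining vertices with nothing to spare. Combining the two bounds then gives $\gamma_{coe}(G\circ K_1)=|V(G)|+|E_{V(G)}|$.
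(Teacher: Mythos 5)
Your proposal is correct and follows essentially the same route as the paper: both arguments force all pendant vertices (odd degree $1$) and all vertices of $E_{V(G)}$ (degree flipped to odd by the pendant edge) into every co-even dominating set via Proposition \ref{pro-sha}(i), and then observe that this forced set already dominates everything and leaves only even-degree vertices outside. Your write-up merely makes explicit the lower-bound/upper-bound bookkeeping that the paper's terser proof leaves implicit; there is no substantive difference.
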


	\begin{proof}
Since every vertex in copies of $K_1$ has odd degree in $G\circ K_1$, then all these vertices should be in our dominating set. Now it is clear that this set of vertices is a dominating set for $G\circ K_1$. Every vertex with even degree in $G$ is a vertex with odd degree now, and we need these vertices in our dominating set too. So 
$$ \gamma _{coe}(G\circ K_1)= \Big|V(G)\Big|+\Big|E_{V(G)}\Big|.$$ 
\qed
	\end{proof}	

We end this section by an immediate  result of Theorem \ref{GoK1}.

\begin{corollary}
$ \gamma _{coe}(P_n\circ K_1)=2n-2$
and 
$ \gamma _{coe}(C_n\circ K_1)=2n$.
\end{corollary}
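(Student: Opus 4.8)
The plan is to apply Theorem~\ref{GoK1} directly, since the corollary is advertised as an immediate consequence of it. Recall that Theorem~\ref{GoK1} gives $\gamma_{coe}(G\circ K_1)=|V(G)|+|E_{V(G)}|$, where $E_{V(G)}$ denotes the set of even-degree vertices of $G$. Thus the entire task reduces to counting, for each of the two base graphs $P_n$ and $C_n$, the total number of vertices and the number of vertices of even degree, and then substituting these two quantities into the formula.

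First I would handle $G=P_n$. The path on $n$ vertices has $|V(P_n)|=n$, and its degree sequence consists of two endpoints of degree $1$ together with $n-2$ internal vertices of degree $2$. Hence the even-degree vertices are precisely the internal ones, so that $|E_{V(P_n)}|=n-2$. Substituting into Theorem~\ref{GoK1} then yields $\gamma_{coe}(P_n\circ K_1)=n+(n-2)=2n-2$, as claimed.

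Next I would treat $G=C_n$. The cycle is $2$-regular, so every one of its $n$ vertices has even degree; consequently $|E_{V(C_n)}|=|V(C_n)|=n$. Substituting once more into Theorem~\ref{GoK1} gives $\gamma_{coe}(C_n\circ K_1)=n+n=2n$, completing the computation.

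Since both steps amount to nothing more than reading off a degree sequence, there is no genuine difficulty here; the only point demanding a moment's care is the correct count of even-degree vertices. In particular one must remember that the two endpoints of $P_n$ have odd degree and therefore do \emph{not} contribute to $E_{V(P_n)}$, whereas in $C_n$ every vertex contributes. Implicitly one also assumes $n\geq 2$ so that $P_n$ genuinely has two endpoints and the formula $2n-2$ is meaningful.
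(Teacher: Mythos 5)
Your proposal is correct and matches the paper exactly: the paper presents this corollary as an immediate consequence of Theorem~\ref{GoK1}, and your substitution of $|V(P_n)|=n$, $|E_{V(P_n)}|=n-2$ and $|V(C_n)|=|E_{V(C_n)}|=n$ into the formula $\gamma_{coe}(G\circ K_1)=|V(G)|+|E_{V(G)}|$ is precisely the intended (unwritten) computation. Your closing remark that $n\geq 2$ is needed for $P_n$ is a sensible extra precision, since for $n=1$ the single vertex of $P_1$ has even degree and the formula $2n-2$ would fail.
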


\section{ Co-even domination number of the neighbourhood corona of two graphs }

In this section, we consider to the neighbourhood corona of two graphs. 
The neighbourhood corona  $G_1 \star G_2$ is the graph obtained by
taking one copy of $G_1$ and $|V_1|$ copies of $G_2$ and  joining the neighbours of the $i$th vertex of $G_1$ to every vertex in the $i$th copy of $G_2$. Figure \ref{fig1} shows $P_4\star P_3$, where $P_n$ is  the path of order $n$. Now we propose an upper and lower bound for co-even domination number of the neighbourhood corona of two graphs. Also show that these bounds are sharp.

	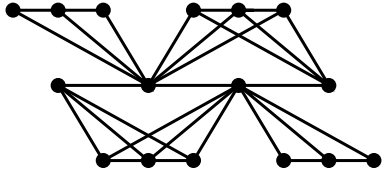
\begin{figure}
		\begin{center}
			\psscalebox{0.5 0.5}
{
\begin{pspicture}(0,-5.6)(9.994231,-1.2057691)
\psdots[linecolor=black, dotsize=0.4](0.19711533,-1.4028845)
\psdots[linecolor=black, dotsize=0.4](1.3971153,-1.4028845)
\psdots[linecolor=black, dotsize=0.4](2.5971153,-1.4028845)
\psdots[linecolor=black, dotsize=0.4](1.3971153,-3.4028845)
\psdots[linecolor=black, dotsize=0.4](3.7971153,-3.4028845)
\psdots[linecolor=black, dotsize=0.4](3.7971153,-5.4028845)
\psdots[linecolor=black, dotsize=0.4](2.5971153,-5.4028845)
\psdots[linecolor=black, dotsize=0.4](4.997115,-5.4028845)
\psdots[linecolor=black, dotsize=0.4](4.997115,-1.4028845)
\psdots[linecolor=black, dotsize=0.4](6.1971154,-1.4028845)
\psdots[linecolor=black, dotsize=0.4](7.397115,-1.4028845)
\psdots[linecolor=black, dotsize=0.4](6.1971154,-3.4028845)
\psdots[linecolor=black, dotsize=0.4](7.397115,-5.4028845)
\psdots[linecolor=black, dotsize=0.4](8.5971155,-5.4028845)
\psdots[linecolor=black, dotsize=0.4](9.797115,-5.4028845)
\psdots[linecolor=black, dotsize=0.4](8.5971155,-3.4028845)
\psline[linecolor=black, linewidth=0.04](1.3971153,-3.4028845)(1.3971153,-3.4028845)
\psline[linecolor=black, linewidth=0.08](1.3971153,-3.4028845)(8.5971155,-3.4028845)(8.5971155,-3.4028845)
\psline[linecolor=black, linewidth=0.08](1.3971153,-3.4028845)(4.997115,-5.4028845)(4.997115,-5.4028845)
\psline[linecolor=black, linewidth=0.08](1.3971153,-3.4028845)(3.7971153,-5.4028845)(3.7971153,-5.4028845)
\psline[linecolor=black, linewidth=0.08](1.3971153,-3.4028845)(2.5971153,-5.4028845)(2.5971153,-5.4028845)
\psline[linecolor=black, linewidth=0.08](2.5971153,-5.4028845)(4.997115,-5.4028845)(4.997115,-5.4028845)
\psline[linecolor=black, linewidth=0.08](8.5971155,-3.4028845)(4.997115,-1.4028845)(7.397115,-1.4028845)(8.5971155,-3.4028845)(6.1971154,-1.4028845)(6.5971155,-1.4028845)
\psline[linecolor=black, linewidth=0.08](0.19711533,-1.4028845)(2.5971153,-1.4028845)(3.7971153,-3.4028845)(1.3971153,-1.4028845)(1.3971153,-1.4028845)
\psline[linecolor=black, linewidth=0.08](0.19711533,-1.4028845)(3.7971153,-3.4028845)(4.997115,-1.4028845)(4.997115,-1.4028845)
\psline[linecolor=black, linewidth=0.08](3.7971153,-3.4028845)(6.1971154,-1.4028845)(6.1971154,-1.4028845)
\psline[linecolor=black, linewidth=0.08](3.7971153,-3.4028845)(7.397115,-1.4028845)(7.397115,-1.4028845)
\psline[linecolor=black, linewidth=0.08](6.1971154,-3.4028845)(2.5971153,-5.4028845)(2.5971153,-5.4028845)
\psline[linecolor=black, linewidth=0.08](6.1971154,-3.4028845)(3.7971153,-5.4028845)(3.7971153,-5.4028845)
\psline[linecolor=black, linewidth=0.08](6.1971154,-3.4028845)(4.997115,-5.4028845)(4.997115,-5.4028845)
\psline[linecolor=black, linewidth=0.08](6.1971154,-3.4028845)(7.397115,-5.4028845)(7.397115,-5.4028845)
\psline[linecolor=black, linewidth=0.08](7.397115,-5.4028845)(9.797115,-5.4028845)(6.1971154,-3.4028845)(8.5971155,-5.4028845)(8.5971155,-5.4028845)
\end{pspicture}
}
		\end{center}
		\caption{Graph $P_4 \star P_3$} \label{fig1}
	\end{figure}

	\begin{theorem}\label{GstarH}
Let $G=(V(G),E(G))$ and $H=(V(H),E(H))$ be two connected graphs, $E_{V(G)}$ and $O_{V(G)}$ be the set of vertices with even degree and odd degree of $G$, respectively. Then for $G\star H$, 
\begin{itemize}
\item[(i)]
$$ \gamma _{coe}(G\star H)\geq \Big|O_{V(G)}\Big|\Big|E_{V(H)}\Big|+\Big|E_{V(G)}\Big|\Big|O_{V(H)}\Big|.$$
\item[(ii)]
$$ \gamma _{coe}(G\star H)\leq \Big|V(G)\Big|+\Big|O_{V(G)}\Big|\Big|E_{V(H)}\Big|+\Big|E_{V(G)}\Big|\Big|O_{V(H)}\Big|.$$
\end{itemize}
	\end{theorem}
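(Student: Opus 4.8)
The plan is to reduce both bounds to a single degree computation in $G\star H$, followed by the forced-vertex principle of Proposition \ref{pro-sha}(i). First I would fix an enumeration $V(G)=\{v_1,\dots,v_{|V(G)|}\}$ and write $H_i$ for the $i$th copy of $H$. By the definition of the neighbourhood corona, a vertex $v_i$ retains its $d_G(v_i)$ edges inside $G$ and, in addition, is joined to every vertex of each copy $H_j$ with $v_j\in N_G(v_i)$; since there are exactly $d_G(v_i)$ such copies, this yields
\[
d_{G\star H}(v_i)=d_G(v_i)\big(1+|V(H)|\big).
\]
A vertex $u$ lying in $H_i$ keeps its $d_H(u)$ edges inside $H_i$ and is joined to the $d_G(v_i)$ neighbours of $v_i$ in $G$, so
\[
d_{G\star H}(u)=d_H(u)+d_G(v_i).
\]

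From the second formula I would read off the parity of the copy vertices: inside $H_i$ the odd-degree vertices are precisely those $u$ for which $d_H(u)$ has parity opposite to $d_G(v_i)$. Hence if $v_i\in O_{V(G)}$ the odd-degree vertices of $H_i$ are exactly the $|E_{V(H)}|$ even-degree vertices of $H$, while if $v_i\in E_{V(G)}$ they are the $|O_{V(H)}|$ odd-degree vertices of $H$. Summing over all copies shows that the total number of odd-degree vertices contained in the copies of $H$ equals
\[
\big|O_{V(G)}\big|\big|E_{V(H)}\big|+\big|E_{V(G)}\big|\big|O_{V(H)}\big|.
\]

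For the lower bound (i), I would invoke Proposition \ref{pro-sha}(i): every odd-degree vertex lies in every co-even dominating set. In particular all odd-degree vertices among the copies of $H$ are forced, which immediately gives the claimed inequality (any additional forced vertices coming from $G$ only increase the count). For the upper bound (ii), I would exhibit the concrete set
\[
D=V(G)\cup\big\{\,u : u \text{ is an odd-degree vertex of some copy } H_i\,\big\},
\]
whose cardinality is exactly the right-hand side of (ii). I would then verify the two defining properties. Domination: each $v_i\in D$, and each even-degree vertex $u\in H_i$ is adjacent to every neighbour of $v_i$ in $G$; since $G$ is connected, $v_i$ has at least one neighbour, so $u$ is dominated by a vertex of $V(G)\subseteq D$. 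Co-even: the only vertices outside $D$ are the even-degree vertices of the copies, whose degrees are even by construction.

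The routine part is the bookkeeping in the two degree formulas; the only genuine subtlety is the domination check in the upper-bound construction, where one must ensure that every even-degree copy vertex really has a dominator in $V(G)$, which is exactly where connectedness of $G$ (guaranteeing $d_G(v_i)\ge 1$) is used. The gap of $|V(G)|$ between the two bounds reflects that the construction throws in all of $V(G)$ to secure domination, even though many of those vertices need not be forced; for this reason I do not expect the bounds to coincide in general, and the sharpness claim should be established separately by exhibiting families attaining each extreme.
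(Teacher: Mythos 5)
Your argument is correct and takes essentially the same route as the paper's proof: both determine the parity pattern of the copy vertices (even-degree vertices of $H$ become odd in copies attached to odd-degree vertices of $G$, and vice versa), force exactly these vertices into every co-even dominating set via Proposition \ref{pro-sha}(i) to get the lower bound, and obtain the upper bound by adjoining all of $V(G)$ to the forced vertices. Your write-up is in fact more explicit than the paper's (which dismisses the upper-bound verification with ``one can easily check''); the only caveat, shared by the paper itself, is that your appeal to connectedness to guarantee $d_G(v_i)\geq 1$ tacitly assumes $|V(G)|\geq 2$, since for $G=K_1$ the copy of $H$ is not joined to anything and the stated upper bound can fail.
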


	\begin{proof}
 Suppose that $u$ is a vertex with odd degree and $v$ is a vertex with even degree in $G$.  By the definition of $G\star H$, the neighbours of each vertex of $G$ are connected to every vertex in the corresponding copy of $H$. Therefore the vertices in copy of $H$ corresponding to $u$ with odd degree change to a vertex with even one and the vertices  with even degree change to a vertex with odd one. Therefore, by Proposition \ref{pro-sha}, we need all vertices in $E_{V(H)}$ corresponding to $u$ in our co-even domination set. So for all vertices with odd degree in $G$ we have the same and the number of these vertices is $\big|O_{V(G)}\big|\big|E_{V(H)}\big|$. Also, the vertices in copy of $H$ corresponding to $v$ with odd degree remain vertices with odd degree and the vertices  with even degree  remain vertices with even degree. Therefore, we need all vertices in $O_{V(H)}$ corresponding to $v$ in our co-even domination set. Hence for all vertices with even degree in $G$ we have the same and the number of these vertices is $\big|E_{V(G)}\big|\big|O_{V(H)}\big|$. So,
 $$ \gamma _{coe}(G\star H)\geq \Big|O_{V(G)}\Big|\Big|E_{V(H)}\Big|+\Big|E_{V(G)}\Big|\Big|O_{V(H)}\Big|.$$
 On the other hand, if we put all the vertices of $G$ in the mentioned set, then one can easily check that we have a co-even dominating set for $G\star H$. Therefore,
 $$ \gamma _{coe}(G\star H)\leq \Big|V(G)\Big|+\Big|O_{V(G)}\Big|\Big|E_{V(H)}\Big|+\Big|E_{V(G)}\Big|\Big|O_{V(H)}\Big|,$$
and we have the result.    \qed
	\end{proof}

	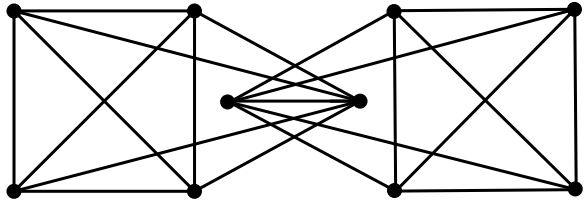
\begin{figure}
		\begin{center}
			\psscalebox{0.5 0.5}
{
\begin{pspicture}(0,-14.78)(15.314231,-9.545769)
\psline[linecolor=black, linewidth=0.08](0.19711548,-9.782885)(4.9971156,-9.782885)(4.9971156,-14.582885)(0.19711548,-9.782885)(0.19711548,-14.582885)(4.9971156,-14.582885)(4.9971156,-14.582885)
\psline[linecolor=black, linewidth=0.08](4.9971156,-9.782885)(0.19711548,-14.582885)(0.19711548,-14.582885)
\psline[linecolor=black, linewidth=0.08](9.397116,-12.182884)(4.9971156,-9.782885)(4.9971156,-9.782885)
\psline[linecolor=black, linewidth=0.08](9.397116,-12.182884)(4.9971156,-14.582885)(4.9971156,-14.582885)
\psline[linecolor=black, linewidth=0.08](15.144653,-14.539504)(10.344803,-14.577384)(10.306923,-9.777533)(15.144653,-14.539504)(15.106773,-9.739654)(10.306923,-9.777533)(10.306923,-9.777533)
\psline[linecolor=black, linewidth=0.08](10.344803,-14.577384)(15.106773,-9.739654)(15.106773,-9.739654)
\psline[linecolor=black, linewidth=0.08](5.926,-12.212181)(10.344803,-14.577384)(10.344803,-14.577384)
\psline[linecolor=black, linewidth=0.08](5.926,-12.212181)(15.144653,-14.539504)(15.144653,-14.539504)
\psline[linecolor=black, linewidth=0.08](5.926,-12.212181)(10.306923,-9.777533)(10.306923,-9.777533)
\psline[linecolor=black, linewidth=0.08](5.926,-12.212181)(15.106773,-9.739654)(15.106773,-9.739654)
\psline[linecolor=black, linewidth=0.08](5.7971153,-12.182884)(9.397116,-12.182884)(9.397116,-12.182884)
\psline[linecolor=black, linewidth=0.08](0.19711548,-14.582885)(9.397116,-12.182884)(9.397116,-12.182884)
\psline[linecolor=black, linewidth=0.08](0.19711548,-9.782885)(9.397116,-12.182884)(9.397116,-12.182884)
\psdots[linecolor=black, dotsize=0.4](5.8771152,-12.202885)
\psdots[linecolor=black, dotsize=0.4](9.397116,-12.182884)
\psline[linecolor=black, linewidth=0.08](8.5971155,-12.182884)(9.397116,-12.182884)(9.397116,-12.182884)
\psdots[linecolor=black, dotsize=0.4](4.9971156,-9.782885)
\psdots[linecolor=black, dotsize=0.4](0.19711548,-9.782885)
\psdots[linecolor=black, dotsize=0.4](0.19711548,-14.582885)
\psdots[linecolor=black, dotsize=0.4](4.9971156,-14.582885)
\psdots[linecolor=black, dotsize=0.4](10.297115,-9.802884)
\psdots[linecolor=black, dotsize=0.4](15.0971155,-9.742885)
\psdots[linecolor=black, dotsize=0.4](15.117115,-14.522884)
\psdots[linecolor=black, dotsize=0.4](10.317116,-14.562884)
\end{pspicture}
}
		\end{center}
		\caption{Graph $P_2 \star K_4$} \label{P2starK4}
	\end{figure}

	\begin{remark}
The bounds in Theorem \ref{GstarH} are sharp.  For the upper  bound, it suffices to consider $G=P_2$ and $H=K_4$. Then 
$\big|O_{V(G)}\big|=0$, $\big|E_{V(G)}\big|=2$, $\big|O_{V(H)}\big|=0$, $\big|E_{V(H)}\big|=4$ and $\big|V(G)\big|=2$. Therefore we have $\gamma _{coe}(P_2 \star K_4)\leq2$. As we see in Figure \ref{P2starK4}, we have only two vertices with odd degree and those vertices are enough for our dominating set too. So 
$\gamma _{coe}(P_2 \star K_4)=2$ . For the lower bound, it suffices to consider $G=C_3$ and $H=K_4$. Then 
$\big|O_{V(G)}\big|=3$, $\big|E_{V(G)}\big|=0$, $\big|O_{V(H)}\big|=0$ and $\big|E_{V(H)}\big|=4$. One can easily check that 
$\gamma _{coe}(C_3 \star K_4)=12$.
	\end{remark}

\section{Co-even domination number of the  Haj\'{o}s sum of two graphs}

In this section we consider to Haj\'{o}s sum  of two graphs. 
Given graphs $G_1 = (V_1,E_1)$ and $G_2 = (V_2, E_2)$
with disjoint vertex sets, an edge $x_1y_1\in E_1$, and an edge $x_2y_2\in E_2$, the
Haj\'{o}s sum $G_3 = G_1(x_1y_1) +_H G_2(x_2y_2)$ is the graph obtained as follows:
begin with
$G_3' = (V_1 \cup V_2, E_1 \cup E_2)$; then in $G_3'$ delete edges $x_1y_1$ and
$x_2y_2$, identify vertices $x_1$ and $x_2$ as $v_H(x_1x_2)$, and add edge $y_1y_2$. Now define $G_3$ as
the current $G_3'$. Figure \ref{HaJ-K4C4} shows the Haj\'{o}s sum  of $K_4$ and $C_4$ with respect to $x_1y_1$ and
$x_2y_2$. First we state a sharp upper bound for Haj\'{o}s sum  of two graphs.

	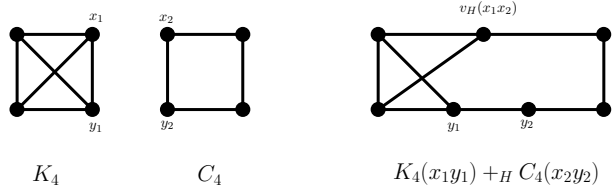
\begin{figure}
		\begin{center}
			\psscalebox{0.5 0.5}
{
\begin{pspicture}(0,-7.555)(15.994231,-2.665)
\rput[bl](2.1171153,-3.235){$x_1$}
\rput[bl](3.9771154,-3.255){$x_2$}
\rput[bl](2.1171153,-6.155){$y_1$}
\rput[bl](4.017115,-6.095){$y_2$}
\rput[bl](11.6371155,-6.135){$y_1$}
\rput[bl](13.577115,-6.055){$y_2$}
\rput[bl](11.957115,-3.055){$v_H(x_1x_2)$}
\psline[linecolor=black, linewidth=0.08](2.1971154,-3.555)(0.19711533,-3.555)(0.19711533,-5.555)(2.1971154,-5.555)(2.1971154,-3.555)(0.19711533,-5.555)(0.19711533,-5.555)
\psline[linecolor=black, linewidth=0.08](0.19711533,-3.555)(2.1971154,-5.555)(2.1971154,-5.555)
\psdots[linecolor=black, dotsize=0.4](0.19711533,-3.555)
\psdots[linecolor=black, dotsize=0.4](2.1971154,-3.555)
\psdots[linecolor=black, dotsize=0.4](0.19711533,-5.555)
\psdots[linecolor=black, dotsize=0.4](2.1971154,-5.555)
\psline[linecolor=black, linewidth=0.08](4.1971154,-5.555)(4.1971154,-3.555)(6.1971154,-3.555)(6.1971154,-5.555)(4.1971154,-5.555)(4.1971154,-5.555)
\psdots[linecolor=black, dotsize=0.4](4.1971154,-3.555)
\psdots[linecolor=black, dotsize=0.4](6.1971154,-3.555)
\psdots[linecolor=black, dotsize=0.4](6.1971154,-5.555)
\psdots[linecolor=black, dotsize=0.4](4.1971154,-5.555)
\psline[linecolor=black, linewidth=0.08](9.797115,-3.555)(11.797115,-5.555)(11.797115,-5.555)
\psdots[linecolor=black, dotsize=0.4](9.797115,-3.555)
\psdots[linecolor=black, dotsize=0.4](12.5971155,-3.555)
\psdots[linecolor=black, dotsize=0.4](9.797115,-5.555)
\psdots[linecolor=black, dotsize=0.4](11.797115,-5.555)
\psdots[linecolor=black, dotsize=0.4](15.797115,-3.555)
\psdots[linecolor=black, dotsize=0.4](15.797115,-5.555)
\psdots[linecolor=black, dotsize=0.4](13.797115,-5.555)
\psline[linecolor=black, linewidth=0.08](11.797115,-5.555)(13.797115,-5.555)(15.797115,-5.555)(15.797115,-3.555)(15.797115,-3.555)
\psline[linecolor=black, linewidth=0.08](15.797115,-3.555)(9.797115,-3.555)(9.797115,-5.555)(11.797115,-5.555)(11.797115,-5.555)
\psline[linecolor=black, linewidth=0.08](9.797115,-5.555)(12.5971155,-3.555)(12.5971155,-3.555)
\rput[bl](0.59711534,-7.555){\LARGE{$K_4$}}
\rput[bl](4.997115,-7.555){\LARGE{$C_4$}}
\rput[bl](10.197115,-7.555){\LARGE{$K_4(x_1y_1)+_H C_4(x_2y_2)$}}
\end{pspicture}
}
		\end{center}
		\caption{ Haj\'{o}s construction of $K_4$ and $C_4$ } \label{HaJ-K4C4}
	\end{figure}

	\begin{theorem}\label{Hajos}
Let $G_1=(V_1,E_1)$ and $G_2=(V_2,E_2)$ be two connected graphs with disjoint vertex sets, $x_1y_1\in E_1$ and $x_2y_2\in E_2$. Then for Haj\'{o}s sum 
$$G_3=G_1(x_1y_1)+_H G_2(x_2y_2),$$
we have:
$$\gamma _{coe}(G_3) \leq  \gamma _{coe} (G_1) + \gamma _{coe} (G_2) +1.  $$
	\end{theorem}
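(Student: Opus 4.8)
The plan is to construct a co-even dominating set of $G_3$ out of minimum co-even dominating sets of the two summands, and to show that at most one extra vertex is ever needed. First I would record how the Haj\'os surgery changes degrees. Deleting $x_1y_1$ and $x_2y_2$ and then inserting $y_1y_2$ restores the degrees of $y_1$ and $y_2$ to their old values $d_{G_1}(y_1)$ and $d_{G_2}(y_2)$, and leaves every vertex other than the identified one with exactly its original degree; only $v_H=v_H(x_1x_2)$ acquires a new degree, namely $d_{G_1}(x_1)+d_{G_2}(x_2)-2$. Consequently $v_H$ is the \emph{only} vertex whose degree parity can differ from its parity in $G_1$ or $G_2$, which is the fact that makes the whole argument go through.

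Next I would fix minimum co-even dominating sets $D_1$ of $G_1$ and $D_2$ of $G_2$ and consider $D_1\cup D_2$ inside $G_3$ (where $x_1$ and $x_2$ have become $v_H$). For any vertex $w\notin\{v_H,y_1,y_2\}$ both defining conditions are inherited for free: its degree is unchanged, so if $w\notin D_1\cup D_2$ it still has even degree; and a $D_i$-dominator of $w$ is joined to $w$ by an edge that the construction never deletes (only $x_1y_1$ and $x_2y_2$ are removed), so $w$ stays dominated. Thus the only candidates for failure are $v_H$ (its parity and its domination) and $y_1,y_2$ (their domination, since each lost its edge to the identified vertex, though their even degree is preserved).

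The argument then splits on whether $v_H\in D_1\cup D_2$, that is, whether $x_1\in D_1$ or $x_2\in D_2$. If $v_H\notin D_1\cup D_2$, I would take $D=D_1\cup D_2\cup\{v_H\}$: since $x_1\notin D_1$ and $x_2\notin D_2$, any dominator of $y_1$ (resp.\ $y_2$) lying in $D_1$ (resp.\ $D_2$) is distinct from $x_1$ (resp.\ $x_2$), so the edge realizing that domination survives and $y_1,y_2$ remain dominated, while placing $v_H$ in $D$ discharges both its parity constraint and its domination constraint at once. If instead $v_H\in D_1\cup D_2$, then $v_H$ is already in the set, so its parity and domination need no attention, and the only possible defect is the domination of $y_1,y_2$; here I would take $D=D_1\cup D_2\cup\{y_1\}$ and use the new edge $y_1y_2$, by which $y_1$ dominates $y_2$. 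In both cases $|D|\le |D_1|+|D_2|+1=\gamma_{coe}(G_1)+\gamma_{coe}(G_2)+1$, and since enlarging $D$ only shrinks $V-D$ and only helps domination, no new violation of the co-even condition is ever introduced; this yields the stated bound.

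The step I expect to be the main obstacle is the domination bookkeeping at $y_1,y_2,v_H$ after the edge surgery, specifically verifying that the two genuinely dangerous phenomena—the potential parity flip at $v_H$ and the potential loss of domination of $y_1,y_2$—never simultaneously demand a \emph{fresh} vertex. The case split above is precisely the device that decouples them: the situation in which $v_H$ must be added is exactly $x_1,x_2\notin D_1\cup D_2$, and that same hypothesis forces $y_1$ and $y_2$ to retain dominators whose edges are untouched, so only one of the two repairs is ever charged against the single "$+1$".
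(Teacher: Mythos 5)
Your proof is correct, and it is organized quite differently from the paper's. The paper fixes $D_{coe}(G_1)$ and $D_{coe}(G_2)$ and then splits into $16$ cases according to which of $x_1,y_1,x_2,y_2$ lie in these sets, explicitly working out four representative cases and declaring the remaining twelve ``similar.'' You instead prove a localization fact first --- the surgery preserves every degree except at $v_H(x_1x_2)$, and the only vertices whose domination can be destroyed are $y_1$, $y_2$ (each loses exactly the edge to the identified vertex) --- and then split into just two cases on whether $v_H\in D_1\cup D_2$, patching with the single vertex $v_H$ in the first case and $y_1$ in the second. This is a genuine improvement in rigor: your two cases exhaust all possibilities with a uniform one-vertex repair, and the key decoupling observation (when $v_H$ must be added, $x_1\notin D_1$ and $x_2\notin D_2$, so the dominators of $y_1,y_2$ are untouched; when $v_H$ is already present, its parity is moot and $y_1$ alone rescues both $y_1$ and, via the new edge $y_1y_2$, also $y_2$) is exactly the point the paper's ``the rest are similar'' leaves unverified. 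What the paper's finer case analysis buys in exchange is sharper bounds in special cases (e.g., $\gamma_{coe}(G_3)\le \gamma_{coe}(G_1)+\gamma_{coe}(G_2)-1$ when all four endpoints lie in the chosen sets, using $(D_1\setminus\{x_1\})\cup(D_2\setminus\{x_2\})\cup\{v_H\}$), information your uniform $+1$ patch does not record --- but none of that is needed for the stated inequality, and your argument establishes it completely.
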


	\begin{proof}
Suppose that $D_{coe}(G_1)$ and $D_{coe}(G_2)$ be the co-even dominating sets of $G_1$ and $G_2$, respectively. We have 16 different cases for $x_1$, $y_1$, $x_2$ and $y_2$ regarding whether they are in $D_{coe}(G_1)$ and $D_{coe}(G_2)$ or not. Note that the degree of vertices $y_1$ and $y_2$ will remain the same in $G_3$ as their degree in $G_1$ and $G_2$, and the degree of $v_H(x_1x_2)$ will be odd if only one of $x_1$ or $x_2$ be odd. We only consider some cases as follows and the rest are similar:
\begin{itemize}
\item[(i)] 
$x_1,y_1\notin D_{coe}(G_1)$ and $x_2,y_2\notin D_{coe}(G_2)$. Since the degree of vertices $y_1$ and $y_2$ is even (otherwise they should be in the co-even dominating set), and they will remain even degree vertices in $G_3$ by the definition of Haj\'{o}s sum, and also the vertex $v_H(x_1x_2)$ in $G_3$ will remain even degree vertex in $G_3$ too, then 
 $$D_{coe}(G_1) \cup D_{coe}(G_2)$$ 
 is a co-even domination set for $G_3$, and 
$$\gamma _{coe}(G_3) \leq  \gamma _{coe} (G_1) + \gamma _{coe} (G_2).$$
\item[(ii)]
$x_1,y_1\in D_{coe}(G_1)$ and $x_2,y_2\in D_{coe}(G_2)$. Then 
$$\big( D_{coe}(G_1)-\{x_1\}\big) \cup \big( D_{coe}(G_2)-\{x_2\} \big) \cup \{v_H(x_1x_2)\}$$
is a co-even dominating set for  $G_3$, and 
$$\gamma _{coe}(G_3) \leq  \gamma _{coe} (G_1) + \gamma _{coe} (G_2)-1.$$
\item[(iii)] 
$x_1\in D_{coe}(G_1)$, $y_1\notin D_{coe}(G_1)$  and $x_2,y_2\notin D_{coe}(G_2)$. Now we consider to $D_{coe}(G_1) \cup D_{coe}(G_2)$ and show that it might not be a co-even dominating set for $G_3$. By the definition of Haj\'{o}s sum, $y_1$ is adjacent to $y_2$ and not adjacent to $v_H(x_1x_2)$. If the only vertex in $D_{coe}(G_1)$ be $x_1$ and is adjacent to $y_1$ in $G_1$, then there are no vertices in $D_{coe}(G_1) \cup D_{coe}(G_2)$ in $G_3$ and adjacent to $y_1$. Therefore we need at least one more vertices which is adjacent to $y_1$ in our dominating set or $y_1$ in our dominating set. One can easily check that
 $$D_{coe}(G_1) \cup D_{coe}(G_2) \cup \{y_2\} $$
 is a co-even dominating set for  $G_3$, and 
$$\gamma _{coe}(G_3) \leq  \gamma _{coe} (G_1) + \gamma _{coe} (G_2)+1.$$
\item[(iv)] 
$x_1\notin D_{coe}(G_1)$, $y_1\in D_{coe}(G_1)$  and $x_2,y_2\notin D_{coe}(G_2)$. Now
$$D_{coe}(G_1) \cup D_{coe}(G_2)$$
is a co-even dominating set for  $G_3$, because $y_2\notin D_{coe}(G_2)$ and whatever was adjacent to $x_2$ in $D_{coe}(G_2)$, is now adjacent to $v_H(x_1x_2)$, and hence
$$\gamma _{coe}(G_3) \leq  \gamma _{coe} (G_1) + \gamma _{coe} (G_2).$$
\end{itemize}
As mentioned before, the other cases are similar and therefore we have the result.
	\qed
	\end{proof}

	\begin{figure}
		\begin{center}
			\psscalebox{0.5 0.5}
{
\begin{pspicture}(0,-7.299306)(19.202778,-0.29791656)
\psline[linecolor=black, linewidth=0.08](3.401389,-1.6993054)(3.401389,-3.6993055)(1.8013889,-4.8993053)(0.2013889,-3.6993055)(0.2013889,-1.6993054)(1.8013889,-0.49930543)(1.8013889,-0.49930543)
\psline[linecolor=black, linewidth=0.08](5.001389,-1.6993054)(6.601389,-1.6993054)(8.201389,-1.6993054)(8.201389,-1.6993054)
\psline[linecolor=black, linewidth=0.08](5.001389,-1.6993054)(5.001389,-3.6993055)(5.001389,-5.2993054)(5.001389,-5.2993054)
\psdots[linecolor=black, dotsize=0.4](5.001389,-5.2993054)
\psdots[linecolor=black, dotstyle=o, dotsize=0.4, fillcolor=white](5.001389,-1.6993054)
\psdots[linecolor=black, dotstyle=o, dotsize=0.4, fillcolor=white](5.001389,-3.6993055)
\psdots[linecolor=black, dotsize=0.4](6.601389,-1.6993054)
\psline[linecolor=black, linewidth=0.08](8.201389,-1.6993054)(8.201389,-5.2993054)(8.201389,-5.2993054)
\psdots[linecolor=black, dotstyle=o, dotsize=0.4, fillcolor=white](8.201389,-1.6993054)
\psdots[linecolor=black, dotstyle=o, dotsize=0.4, fillcolor=white](8.201389,-3.6993055)
\psdots[linecolor=black, dotsize=0.4](8.201389,-5.2993054)
\psdots[linecolor=black, dotsize=0.4](16.20139,-5.2993054)
\psline[linecolor=black, linewidth=0.08](19.001389,-1.6993054)(19.001389,-5.2993054)(19.001389,-5.2993054)
\psdots[linecolor=black, dotstyle=o, dotsize=0.4, fillcolor=white](19.001389,-3.6993055)
\psdots[linecolor=black, dotsize=0.4](19.001389,-5.2993054)
\psline[linecolor=black, linewidth=0.08](13.001389,-0.49930543)(11.401389,-1.6993054)(11.401389,-3.6993055)(13.001389,-4.8993053)(14.601389,-3.6993055)(14.601389,-3.6993055)
\psline[linecolor=black, linewidth=0.08](19.001389,-1.6993054)(17.401388,-1.6993054)(17.401388,-1.6993054)
\psdots[linecolor=black, dotsize=0.4](3.401389,-1.6993054)
\psdots[linecolor=black, dotsize=0.4](3.401389,-0.49930543)
\psdots[linecolor=black, dotsize=0.4](0.2013889,-3.6993055)
\psdots[linecolor=black, dotstyle=o, dotsize=0.4, fillcolor=white](0.2013889,-1.6993054)
\psdots[linecolor=black, dotstyle=o, dotsize=0.4, fillcolor=white](3.401389,-3.6993055)
\psdots[linecolor=black, dotstyle=o, dotsize=0.4, fillcolor=white](1.8013889,-4.8993053)
\psline[linecolor=black, linewidth=0.08](16.20139,-5.2993054)(16.20139,-3.6993055)(16.20139,-4.0993056)
\psdots[linecolor=black, dotsize=0.4](15.401389,-1.6993054)
\psdots[linecolor=black, dotsize=0.4](15.401389,-0.49930543)
\psdots[linecolor=black, dotstyle=o, dotsize=0.4, fillcolor=white](19.001389,-1.6993054)
\psdots[linecolor=black, dotstyle=o, dotsize=0.4, fillcolor=white](11.401389,-1.6993054)
\psdots[linecolor=black, dotstyle=o, dotsize=0.4, fillcolor=white](13.001389,-4.8993053)
\psline[linecolor=black, linewidth=0.08](1.8013889,-0.49930543)(3.401389,-1.6993054)(3.401389,-0.49930543)(3.401389,-0.49930543)
\psdots[linecolor=black, dotstyle=o, dotsize=0.4, fillcolor=white](1.8013889,-0.49930543)
\psline[linecolor=black, linewidth=0.08](14.601389,-3.6993055)(16.20139,-3.6993055)(16.20139,-3.6993055)
\psdots[linecolor=black, dotstyle=o, dotsize=0.4, fillcolor=white](16.20139,-3.6993055)
\psdots[linecolor=black, dotstyle=o, dotsize=0.4, fillcolor=white](14.601389,-3.6993055)
\psline[linecolor=black, linewidth=0.08](13.001389,-0.49930543)(15.401389,-1.6993054)(17.401388,-1.6993054)(17.401388,-1.6993054)
\psline[linecolor=black, linewidth=0.08](15.401389,-1.6993054)(15.401389,-0.49930543)(15.401389,-0.49930543)
\psdots[linecolor=black, dotstyle=o, dotsize=0.4, fillcolor=white](17.401388,-1.6993054)
\psdots[linecolor=black, dotstyle=o, dotsize=0.4, fillcolor=white](13.001389,-0.49930543)
\psdots[linecolor=black, dotstyle=o, dotsize=0.4, fillcolor=white](11.401389,-3.6993055)
\rput[bl](2.7613888,-2.0393054){$x_1$}
\rput[bl](5.201389,-2.0793054){$x_2$}
\rput[bl](2.801389,-3.6793053){$y_1$}
\rput[bl](5.2813888,-3.8193054){$y_2$}
\rput[bl](1.4013889,-7.2993054){\LARGE{$G_1$}}
\rput[bl](6.201389,-7.2993054){\LARGE{$G_2$}}
\rput[bl](11.801389,-7.2993054){\LARGE{$G_3=G_1(x_1y_1)+_H G_2(x_2y_2)$}}
\rput[bl](14.421389,-4.2793055){$y_1$}
\rput[bl](15.701389,-4.2393055){$y_2$}
\rput[bl](14.761389,-2.3593054){$v_H(x_1x_2)$}
\end{pspicture}
}
		\end{center}
		\caption{Graphs $G_1$, $G_2$ and their  Haj\'{o}s sum, respectively. } \label{HaJ-fig}
	\end{figure}
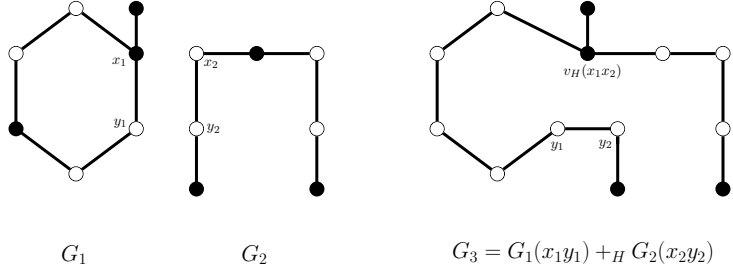

	\begin{remark}
The upper bound in Theorem \ref{Hajos} is sharp. Consider to the graphs $G_1$ and $G_2$ in Figure \ref{HaJ-fig}. The set of black vertices in $G_1$ and $G_2$ are co-even dominating sets for these graphs. Now for $G_3=G_1(x_1y_1)+_H G_2(x_2y_2)$, the set of black vertices is a subset of co-even dominating set, because they have odd degrees. Now among white vertices of $G_3$, it is easy to see  that we need at least three vertices to have a co-even dominating set. Therefore, $\gamma _{coe}(G_3) =  \gamma _{coe} (G_1) + \gamma _{coe} (G_2) +1$.
	\end{remark}

In the last theorem, we have found an upper bound for Haj\'{o}s sum of two graphs. We believe that the following is a lower bound for that:

	\begin{conjecture}\label{Conj}
Let $G_1=(V_1,E_1)$ and $G_2=(V_2,E_2)$ be two connected graphs with disjoint vertex sets, $x_1y_1\in E_1$ and $x_2y_2\in E_2$. Then for Haj\'{o}s sum 
$$G_3=G_1(x_1y_1)+_H G_2(x_2y_2),$$
we have:
$$\gamma _{coe}(G_3) \geq \gamma _{coe} (G_1) + \gamma _{coe} (G_2)-2.  $$
	\end{conjecture}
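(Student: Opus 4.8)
The plan is to prove the lower bound by showing that every co-even dominating set of $G_3$ can be split, at a cost of at most two extra vertices, into a co-even dominating set of $G_1$ together with one of $G_2$. Concretely, I would fix a minimum co-even dominating set $D_3$ of $G_3$, so that $|D_3|=\gamma_{coe}(G_3)$, and define
\[
D_1 = \big(D_3\cap(V_1\setminus\{x_1\})\big)\cup\{x_1\}, \qquad
D_2 = \big(D_3\cap(V_2\setminus\{x_2\})\big)\cup\{x_2\}.
\]
The purpose of adjoining $x_1$ and $x_2$ is precisely to repair the two places where the Haj\'{o}s construction alters adjacencies.

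Before checking these are co-even dominating sets, I would record the degree bookkeeping. For every $w\in V_1\setminus\{x_1\}$ the construction leaves the neighbourhood of $w$ essentially unchanged: deleting $x_1y_1$ and adding $y_1y_2$ only rebalances the degree of $y_1$, and replacing $x_1$ by $v_H$ preserves every other incidence, so $\deg_{G_3}(w)=\deg_{G_1}(w)$; symmetrically $\deg_{G_3}(u)=\deg_{G_2}(u)$ for $u\in V_2\setminus\{x_2\}$. Thus the only vertex whose parity is not inherited from its parent graph is the merged vertex $v_H$, and since $x_1\in D_1$ and $x_2\in D_2$, that vertex is irrelevant to the co-even condition imposed on $D_1$ and $D_2$.

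Then I would verify that $D_1$ is a co-even dominating set of $G_1$ (and $D_2$ of $G_2$ by symmetry). For the co-even condition, any $w\in V_1\setminus D_1$ satisfies $w\neq x_1$ and $w\notin D_3$, hence $\deg_{G_1}(w)=\deg_{G_3}(w)$ is even because $D_3$ is co-even. For domination, such a $w$ has a neighbour $z\in D_3$ in $G_3$; if $z\in V_1\setminus\{x_1\}$ then $wz\in E_1$ and $z\in D_1$, while if $z=v_H$ then $x_1w\in E_1$ and $x_1\in D_1$ dominates $w$. The only remaining possibility is the new cross-edge $y_1y_2$, that is, $w=y_1$ dominated in $G_3$ solely by $y_2$; but in $G_1$ the vertex $x_1\in D_1$ is adjacent to $y_1$, so $w=y_1$ is still dominated. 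This last subcase is exactly the reason $x_1$ was inserted, and I expect it to be the main obstacle, since one must rule out the scenario in which $y_1$ loses all of its dominators when passing from $G_3$ back to $G_1$.

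Finally I would count. Since $x_1\notin V_1\setminus\{x_1\}$, we have $|D_1|=|D_3\cap(V_1\setminus\{x_1\})|+1$ and likewise $|D_2|=|D_3\cap(V_2\setminus\{x_2\})|+1$, whereas $|D_3|=|D_3\cap(V_1\setminus\{x_1\})|+|D_3\cap(V_2\setminus\{x_2\})|+\varepsilon$, where $\varepsilon\in\{0,1\}$ records whether $v_H\in D_3$. Hence $|D_1|+|D_2|=|D_3|-\varepsilon+2\le|D_3|+2$, and therefore
\[
\gamma_{coe}(G_1)+\gamma_{coe}(G_2)\le|D_1|+|D_2|\le\gamma_{coe}(G_3)+2,
\]
which rearranges to the claimed inequality $\gamma_{coe}(G_3)\ge\gamma_{coe}(G_1)+\gamma_{coe}(G_2)-2$. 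The same computation incidentally shows the bound improves to $-1$ whenever some minimum co-even dominating set of $G_3$ contains $v_H$.
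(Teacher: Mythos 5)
The statement you were asked to prove is not proved in the paper at all: it appears only as Conjecture \ref{Conj}, accompanied by a remark on sharpness for $K_4(x_1y_1)+_H K_4(x_2y_2)$, and proving it is explicitly listed as future-work item (i) in the conclusions. So there is no proof of the author's to compare against; your proposal must stand on its own, and after checking each step I believe it does. The three load-bearing claims are all correct. First, in $G_3$ every vertex other than $v_H(x_1x_2)$ keeps its degree: incidences with $x_1$ (resp.\ $x_2$) become incidences with $v_H$, while $y_1$ and $y_2$ each lose one edge ($x_1y_1$, $x_2y_2$) and gain one ($y_1y_2$); since $V_1\cap V_2=\emptyset$ no parallel edges arise, so for $w\in V_1\setminus D_1$ (hence $w\neq x_1$ and $w\notin D_3$) the degree $\deg_{G_1}(w)=\deg_{G_3}(w)$ is even, which is exactly the co-even condition. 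Second, your domination case analysis is exhaustive: a $G_3$-dominator $z\in D_3$ of $w$ lies in $V_1\setminus\{x_1\}$ (then $wz\in E_1$ and $z\in D_1$), or $z=v_H$ (then $wx_1\in E_1\setminus\{x_1y_1\}$ and $x_1\in D_1$ dominates $w$), or $z\in V_2\setminus\{x_2\}$, which forces $w=y_1$ and $z=y_2$ because $y_1y_2$ is the only cross edge, and again $x_1\in D_1$ dominates $y_1$ since $x_1y_1\in E_1$. Third, the count $|D_1|+|D_2|=|D_3|+2-\varepsilon\leq \gamma_{coe}(G_3)+2$ is right, because $V(G_3)$ is the disjoint union of $V_1\setminus\{x_1\}$, $V_2\setminus\{x_2\}$ and $\{v_H\}$, and it rearranges to the claimed inequality.

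Two small remarks. Your argument nowhere uses connectedness, so it proves slightly more than the conjecture states, and your closing observation that the bound improves to $-1$ whenever some minimum co-even dominating set of $G_3$ contains $v_H$ is a genuine strengthening the paper does not anticipate. The construction is also consistent with the paper's sharpness example: for two copies of $K_4$ the six vertices of odd degree in $G_3$ form the unique minimum co-even dominating set, so $\varepsilon=0$, each $D_i$ has size $4=\gamma_{coe}(K_4)$, and equality holds throughout. Modulo writing out the symmetric verification for $D_2$, you have settled the paper's open conjecture affirmatively.
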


	\begin{remark}
If the Conjecture \ref{Conj} be true, the lower bound for Haj\'{o}s sum of two graphs is sharp. It suffices to consider $G_1$ and $G_2$ as complete graph $K_4$. Then for every $x_1y_1, x_2y_2\in E_{K_4}$, we have 
$\gamma _{coe}\big( K_4(x_1y_1)+_H K_4(x_2y_2) \big)=6$.
	\end{remark}

\section{Conclusions}

In this paper, we obtained the co-even domination number of join and corona of two graphs and presents some sharp lower and upper bounds for neighbourhood corona and  Haj\'{o}s sum of two graphs
Future topics of interest for future research include the following suggestions:

\begin{itemize}
\item[(i)]
Proving Conjecture \ref{Conj} or finding the correct lower bound for $\gamma _{coe}(G_1(x_1y_1)+_H G_2(x_2y_2))$.
\item[(ii)]
Finding co-even domination number of other binary operations of graphs such as lexicographic product, strong product, tensor product, etc.
\item[(iii)]
Finding co-even domination number of unary operations on graphs. 
\end{itemize}

	\section{Acknowledgements} 
	
	The  author would like to thank the Research Council of Norway and Department of Informatics, University of
	Bergen for their support.	Also he is thankful to Michael Fellows and
Saeid Alikhani for conversations and sharing their pearls of wisdom with him during the course
of this research.


\begin{thebibliography}{99}



	\bibitem{Demir} N. Ç. Demirpolat, E. Kılıç, Co-Even Domination Number of Some Path Related Graphs, {\it  Journal of Modern Technology and Engineering  \/}, 6(2) (2021) 143-150.


\bibitem{Gop}  I. Gopalapillai,  The spectrum of neighborhood corona of graphs, {\it Kragujevac Journal of Mathematics}, 35  (2011) 493-500.


	\bibitem{Har} F. Harary,   Graph Theory, {\it Addison-Wesley, Reading, MA\/}, (1969).
	

	\bibitem{domination} T.W. Haynes, S.T. Hedetniemi, P.J. Slater, Fundamentals of domination in graphs, {\it Marcel Dekker\/}, NewYork, (1998).


	\bibitem{HAJOSSUM} G. Haj\'{o}s, {\"U}ber eine Konstruktion nicht $n$-f{\"a}rbbarer Graphen, {\it Wiss. Z. Martin-Luther-Univ. Halle-Wittenberg Math.-Natur. Reihe\/}, 10 (1961) 116–117.

	\bibitem{Sha} M. M. Shalaan, A. A. Omran,   Co-even Domination in Graphs, {\it International Journal of
Control and Automation\/}, 13(3) (2020)  330-334.


	\bibitem{Sha1} M. M. Shalaan, A. A. Omran, (2020). Co-even Domination Number in Some Graphs.  {\it IOP Conf. Series: Materials Science and Engineering\/}, 928 (2020) 042015, doi:10.1088/1757-899X/928/4/042015.


\end{thebibliography}
\end{document}